\documentclass[11pt]{article}

\usepackage[T1]{fontenc}
\usepackage[utf8]{inputenc}
\usepackage{lmodern}

\usepackage[a4paper,margin=1in]{geometry}
\usepackage{amsmath,amssymb,amsthm,mathtools}
\usepackage{hyperref}
\usepackage{enumitem}

\hypersetup{
  hidelinks,
  pdftitle={An Explicit Cubic Ramanujan--Sato Series for 1/pi on Gamma0(2)+ at D=-163},
  pdfauthor={Vedran Mendjusic},
  pdfsubject={Ramanujan--Sato series for 1/pi},
  pdfkeywords={Ramanujan--Sato, modular forms, complex multiplication, Gamma0(2), pi},
  pdfcreator={LaTeX},
  pdfproducer={pdfTeX}
}

\title{An Explicit Cubic Ramanujan--Sato Series for \texorpdfstring{\(1/\pi\)}{1/pi} on \texorpdfstring{\(\Gamma_0(2)^+\)}{Gamma0(2)+} at \texorpdfstring{\(D=-163\)}{D=-163}}
\author{Vedran Menđušić}
\date{June 27, 2026}

\newtheorem{theorem}{Theorem}
\newtheorem{proposition}{Proposition}
\newtheorem{lemma}{Lemma}

\DeclareMathOperator{\Norm}{Norm}

\newcommand{\SL}{\mathrm{SL}}

\begin{document}

\maketitle

\begin{abstract}
This note records an explicit cubic Ramanujan--Sato type formula for
\(1/\pi\) attached to the Fricke group \(\Gamma_0(2)^+\) and the Heegner
discriminant \(D=-163\).  The hypergeometric kernel is
\[
{}_3F_2\!\left(\frac14,\frac12,\frac34;1,1;X\right)
=
\sum_{n\ge0}\frac{(4n)!}{256^n(n!)^4}X^n.
\]
The construction uses the Hauptmodul \(H=w+64/w\), the modular parameter
\(X=256/H^2\), the singular value \(j=-640320^3\), and the Masser--Milla
almost-holomorphic CM evaluation specified in Section~7.  The resulting
parameter has absolute value about \(10^{-15}\), so the series has a rapid
per-term contraction, while its coefficients remain cubic algebraic numbers.
\end{abstract}

\noindent\textbf{2020 Mathematics Subject Classification:} 11F03, 11Y60, 33C20.

\noindent\textbf{Keywords:} Ramanujan--Sato series, modular forms, complex multiplication, hypergeometric functions.

\section{Introduction and relation to previous work}

Ramanujan--Sato series for \(1/\pi\) are a well-developed part of the theory
of modular forms and complex multiplication, going back to Ramanujan's
formulas \cite{Ramanujan} and their modern interpretations through modular
parametrizations \cite{BorweinAGM}. Chan--Cooper type constructions organize such series by levels associated
with congruence subgroups, and later work has produced systematic catalogues
of rational and quadratic examples; see, for example,
\cite{HuberSchultzYe,Level17,WeiTaoGuo,CampbellCooperYe}.

The present formula lies just outside those catalogued cases.  The Heegner
discriminant \(D=-163\) has class number one \cite{Cox}, so the classical singular
modulus
\[
j\!\left(\frac{1+\sqrt{-163}}2\right)=-640320^3
\]
is rational and leads in the usual level-one setting to the Chudnovsky-type
series \cite{Chudnovsky,Campbell163}.  Here the same CM point is evaluated
through the level-two Hauptmodul
\[
t(\tau)=\left(\frac{\eta(\tau)}{\eta(2\tau)}\right)^{24}.
\]
Since
\[
j=\frac{(t+256)^3}{t^2},
\]
the value \(t(\tau_{163})\) is cubic over \(\mathbb Q\) even though
\(j(\tau_{163})\in\mathbb Q\).  Cubic Ramanujan-type series are known in other
senses, including the class-number-three examples of Borwein and Borwein and
the signature-three constructions developed by Chan and Liaw
\cite{BorweinClassNumber,ChanLiaw}.  The present cubic field has a different
origin: it comes from pulling the rational singular modulus
\(j(\tau_{163})=-640320^3\) through the degree-three covering
\(X_0(2)\to X(1)\), rather than from a class-number-three CM field or the
signature-three theory.

The relation with the Chudnovsky formula should be stated carefully.  Both
identities use the same Heegner discriminant \(D=-163\) and the same level-one
singular modulus \(j(\tau_{163})=-640320^3\).  The Chudnovsky series is the
level-one rational-coefficient specialization, whereas the identity here pulls
the same CM point through the degree-three covering
\(X_0(2)\to X(1)\) and then uses the Fricke-invariant parameter
\(X=256/H^2\).  Thus the two formulas are CM-related, but the present
\({}_3F_2(1/4,1/2,3/4;1,1;X)\) identity is not presented as a mere numerical
refitting of the Chudnovsky coefficients.  Campbell's work on the Heegner number
163 and related extensions of the Chudnovsky algorithm provides the closest
level-one comparison point \cite{Campbell163,CampbellExtension}.

The speed is noteworthy but not algorithmically free.  The contraction is
slightly stronger per term than in the classical Chudnovsky formula at
\(D=-163\), but the coefficients here are cubic algebraic numbers rather than
rational integers.  The formula should therefore be viewed primarily as an
explicit CM/Ramanujan--Sato identity, not as a practical replacement for the
optimized Chudnovsky method.

To the author's knowledge, the resulting explicit \(\Gamma_0(2)^+\),
\(D=-163\) specialization of
\({}_3F_2(1/4,1/2,3/4;1,1;X)\), with the cubic constants denoted below by
\(\xi\) and \(\alpha\), has not appeared in the indexed literature.

\section{Modular setup}

Let
\[
q=e^{2\pi i\tau},
\qquad
t(\tau)=\left(\frac{\eta(\tau)}{\eta(2\tau)}\right)^{24}.
\]
The classical relation between \(t\) and the absolute modular invariant is
\begin{equation}
j(\tau)=\frac{(t(\tau)+256)^3}{t(\tau)^2}.
\label{eq:j-t}
\end{equation}

Define
\[
w(\tau)=\left(\sqrt2\,\frac{\eta(2\tau)}{\eta(\tau)}\right)^{12}.
\]
Then
\[
w(\tau)^2=\frac{4096}{t(\tau)}.
\]
Under the Fricke involution
\[
W_2:\tau\mapsto -\frac1{2\tau},
\]
one has
\[
w(W_2\tau)=\frac{64}{w(\tau)}.
\]
Hence the Fricke-invariant Hauptmodul is
\[
H(\tau)=w(\tau)+\frac{64}{w(\tau)}.
\]

Put
\[
S(\tau)=H(\tau)^2-128.
\]
Then
\begin{equation}
S(\tau)=t(\tau)+\frac{4096}{t(\tau)}.
\label{eq:S-t}
\end{equation}
Eliminating \(t\) between \eqref{eq:j-t} and \eqref{eq:S-t} gives
\begin{equation}
j^2-(S^2+49S-6656)j+(S+272)^3=0.
\label{eq:j-S}
\end{equation}

\section{The hypergeometric parameter}

Let
\[
F(X)=
{}_3F_2\left(\frac14,\frac12,\frac34;1,1;X\right).
\]
Then
\[
F(X)=
\sum_{n=0}^{\infty}
\frac{(1/4)_n(1/2)_n(3/4)_n}{(n!)^3}X^n
=
\sum_{n=0}^{\infty}
\frac{(4n)!}{256^n(n!)^4}X^n.
\]

For the \(\Gamma_0(2)^+\) branch set
\begin{equation}
X(\tau)=\frac{256}{H(\tau)^2}.
\label{eq:X-H}
\end{equation}
Since
\[
H^2=
\left(w+\frac{64}{w}\right)^2
=
\frac{(t+64)^2}{t},
\]
one also has
\begin{equation}
X(\tau)=\frac{256t(\tau)}{(t(\tau)+64)^2}.
\label{eq:X-t}
\end{equation}
Consequently,
\begin{equation}
1-X=
\frac{(t-64)^2}{(t+64)^2},
\qquad
\sqrt{1-X}=\frac{t-64}{t+64}
\label{eq:sqrt1X}
\end{equation}
after fixing the local branch at the cusp.

\section{Modular verification of the hypergeometric identity}

Define
\[
G(\tau)=2E_2(2\tau)-E_2(\tau).
\]
The \(q\)-expansion starts with \(G=1+24q+\cdots\), and
\[
q\frac{d}{dq}\log t
=
E_2(\tau)-2E_2(2\tau)
=
-G(\tau).
\]

\begin{proposition}
For \(X=256t/(t+64)^2\),
\begin{equation}
F(X(\tau))=G(\tau).
\label{eq:F-G}
\end{equation}
\end{proposition}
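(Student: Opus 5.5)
The plan is to use the standard differential-equation argument. Both sides are holomorphic near the cusp $q=0$, and both are annihilated by the same third-order linear ODE in the variable $X$. Once that is shown, agreement of finitely many $q$-coefficients forces $F(X(\tau))=G(\tau)$ identically.

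\textbf{Step 1: the equation satisfied by $F$.} The series $F(X)$ satisfies the hypergeometric equation
\[
\bigl[\theta^3 - X(\theta+\tfrac14)(\theta+\tfrac12)(\theta+\tfrac34)\bigr]F=0,
\qquad \theta=X\frac{d}{dX}.
\]
It is the unique solution analytic at $X=0$ with $F(0)=1$, since the indicial roots at $0$ are $0,0,0$.

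\textbf{Step 2: $G$ as a function of $X$.} Near the cusp, $X=256t/(t+64)^2 = 4/t+O(1/t^2)$. Here $t=q^{-1}-24+\cdots$, so $X=4q+O(q^2)$ is a local coordinate at $q=0$. Moreover, $G$ is a weight-two form for $\Gamma_0(2)$, and in fact for $\Gamma_0(2)^+$. Indeed, $E_2$ transforms with a quasi-modular correction $\frac{6}{\pi i}c(c\tau+d)$, and this correction cancels in $2E_2(2\tau)-E_2(\tau)$. For the Fricke involution, a direct computation gives $G(-1/(2\tau))=-2\tau^2G(\tau)$, which is weight-two invariance up to the normalization of $W_2$.

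By the classical theorem that a weight-$k$ modular form, viewed as a function of a Hauptmodul, satisfies a linear ODE of order $k+1$ with rational coefficients, $G$ satisfies a third-order equation in $X$. Concretely, the plan is to compute this equation from the logarithmic derivative
\[
q\frac{dX}{dq}=X\cdot\frac{64-t}{t+64}\cdot q\frac{d\log t}{dq}=-X\,\frac{t-64}{t+64}\,G=-X\sqrt{1-X}\,G,
\]
using \eqref{eq:sqrt1X}. This identity lets us write $\theta_q=-\sqrt{1-X}\,G\,\theta_X$. The needed ingredient is the symmetric-square structure: $\sqrt{G}$ should satisfy a second-order equation in $X$, namely the one for ${}_2F_1(1/8,3/8;1;X)$, and Clausen's formula then produces the ${}_3F_2$ equation. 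Alternatively, one can argue with weight-2 forms and their derivatives directly. The standard route is to express $\theta_q G$ and $\theta_q^2 G$ using Ramanujan-type identities for $E_2(\tau)$, $E_2(2\tau)$, $E_4(\tau)$ and $E_4(2\tau)$. After substitution one then checks that the resulting ODE in $X$ is exactly the one in Step 1.

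\textbf{Step 3: conclusion.} With both functions annihilated by the same operator, $G$ as a function of $X$ is analytic at $X=0$ with $G=1$ there. Since the solution space analytic at $0$ is one-dimensional (the other solutions involve $\log X$), we get $G=F(X)$ near the cusp, and hence everywhere by analytic continuation.

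A more pedestrian fallback for Steps 2 and 3 avoids deriving the ODE. Both $F(X(\tau))$ and $G$ can be realized as weight-two forms on $\Gamma_0(2)^+$ with controlled poles. By Sturm's bound, comparing the first few $q$-coefficients then suffices. This still requires knowing a priori that $F(X(\tau))$ is modular, which is exactly the ODE content.

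The main obstacle is Step 2: establishing the third-order ODE for $G$ in the variable $X$, and in particular the appearance of $\sqrt{1-X}$. I would attack it first by proving the second-order statement $\sqrt{G}={}_2F_1(1/8,3/8;1;X)$. The plan is to reduce this to the known level-2 identity ${}_2F_1(1/4,3/4;1;x)^2=G$ with $x=64/(t+64)$, via the quadratic transformation $x\mapsto 4x(1-x)=X$. Clausen's identity ${}_2F_1(a,b;a+b+\tfrac12;X)^2={}_3F_2(2a,2b,a+b;a+b+\tfrac12,2a+2b;X)$ with $a=1/8$, $b=3/8$ then gives exactly $F(X)$.
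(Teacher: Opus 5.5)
Your proof is correct, but the step that carries it differs from the paper's.

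\textbf{The paper's route.} The paper stays inside the ODE framework throughout. It uses $\Theta t=-Gt$, Ramanujan's differential equations, and the rational ratios $E_4/G^2=(t+256)/(t+64)$ and $E_4(2\tau)/G^2=(t^2+80t+1024)/(t+64)^2$, which are proved by the valence argument of Section 5. With these it computes $\theta_X^rG=G\,P_r(t,u)$ for $r\le 3$, where $u=E_2/G$. It then checks, by a simplification in $\mathbb{Q}(t,u)$, that the ${}_3F_2$ operator annihilates $G$, and finishes by uniqueness of the normalized holomorphic solution.

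\textbf{Your route.} Your Clausen route factors the statement through classical hypergeometric identities instead. With $x=64/(t+64)$ one has $4x(1-x)=X$. The quadratic transformation ${}_2F_1(\tfrac18,\tfrac38;1;4x(1-x))={}_2F_1(\tfrac14,\tfrac34;1;x)$ and Clausen's formula ${}_2F_1(\tfrac18,\tfrac38;1;X)^2=F(X)$ then reduce everything to the level-two identity ${}_2F_1(\tfrac14,\tfrac34;1;x)^2=G$. That identity holds; it agrees with $G=1+24q+24q^2+96q^3+\cdots$ through $q^3$. Once it is granted, Clausen gives $F(X)=G$ directly as $q$-series, so the ODE and uniqueness step in your Steps 2--3 is unnecessary.

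\textbf{What each buys.} Your route is shorter and explains the structure. It shows $\sqrt{1-X}=1-2x=(t-64)/(t+64)$. Since $t(W_2\tau)=4096/t(\tau)$, the Fricke involution acts by $x\mapsto 1-x$, so $X=4x(1-x)$ is exactly the Fricke-quotient coordinate. The cost is that the real content is outsourced to the cited level-two identity. That identity comes from Ramanujan's quartic theory, or equivalently from Jacobi's $\theta_3^2={}_2F_1(\tfrac12,\tfrac12;1;\lambda)$, together with $G=\theta_3^4+\theta_2^4$ and a second quadratic transformation. The paper's verification, by contrast, is self-contained given Section 5 and a finite symbolic computation.

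\textbf{Slips.} A few small errors, none of which affect the Clausen route:
\begin{itemize}
\item $256t/(t+64)^2=256/t+O(t^{-2})$, so $X=256q+O(q^2)$, not $4q+O(q^2)$.
\item Since $q\,d\log t/dq=-G$, one gets $q\,dX/dq=+X\sqrt{1-X}\,G$. Your sign is off; the plus sign is forced by $X\sim 256q$.
\item Your computation $G(-1/(2\tau))=-2\tau^2G(\tau)$ is right, but it means $G|_2W_2=-G$. So $G$ is anti-invariant under the Fricke involution, not invariant ``up to normalization''. This matches $\sqrt{1-X}$ changing sign under $W_2$.
\end{itemize}
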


\begin{proof}
The function
\[
F(X)=
{}_3F_2\left(\frac14,\frac12,\frac34;1,1;X\right)
\]
is the normalized local solution at \(X=0\) of
\begin{equation}
\theta_X^3F
-
X\left(\theta_X+\frac14\right)
\left(\theta_X+\frac12\right)
\left(\theta_X+\frac34\right)F=0,
\qquad
\theta_X=X\frac{d}{dX}.
\label{eq:3f2-ode}
\end{equation}
We show that \(G\), considered as a local function of \(X\), satisfies the same
equation.  Put
\[
u=\frac{E_2}{G},\qquad
s=\sqrt{1-X}=\frac{t-64}{t+64},
\qquad
\Theta=q\frac{d}{dq}.
\]
From \(\Theta\log X=Gs\), one has
\[
\theta_X=\frac{1}{Gs}\Theta.
\]
Using Ramanujan's differential relations and the rational modular ratios
\[
R_4=\frac{E_4}{G^2}=\frac{t+256}{t+64},
\qquad
R_{42}=\frac{E_4(2\tau)}{G^2}=
\frac{t^2+80t+1024}{(t+64)^2},
\]
proved below, gives the closed differential system
\[
\Theta t=-Gt,
\]
\[
\Theta G=
\frac{G^2}{12}\left(1+2u+R_4-4R_{42}\right),
\]
and
\[
\Theta u
=
-\frac{G\left(tu^2-2tu+t+64u^2+256u+256\right)}
{12(t+64)}.
\]
Applying \(\theta_X=(Gs)^{-1}\Theta\) repeatedly gives
\[
\theta_X^rG=GP_r(t,u),\qquad r=1,2,3,
\]
with \(P_r\in\mathbb Q(t,u)\).  Substitution into the left hand side of
\eqref{eq:3f2-ode} reduces the Picard--Fuchs residual to
\[
G\left[
P_3-
X\left(P_3+\frac32P_2+\frac{11}{16}P_1+\frac{3}{32}\right)
\right],
\qquad
X=\frac{256t}{(t+64)^2}.
\]
A direct simplification in the rational function field \(\mathbb Q(t,u)\)
gives zero.  Hence \(G\) satisfies the pulled-back hypergeometric equation.

Finally, \(X=256q+O(q^2)\), so \(X\) is a local coordinate at the cusp, and
\(G=1+O(q)\).  Thus \(G(q(X))\) is a holomorphic local solution at \(X=0\)
with value \(1\).  By uniqueness of the normalized holomorphic local solution
of \eqref{eq:3f2-ode}, \(G(\tau)=F(X(\tau))\).
\end{proof}

Using \(X=256t/(t+64)^2\), one obtains
\begin{equation}
q\frac{d}{dq}\log X
=
G(\tau)\frac{t-64}{t+64}
=
G(\tau)\sqrt{1-X}.
\label{eq:dlogX}
\end{equation}
Thus
\[
q\frac{d}{dq}F(X(\tau))
=
G(\tau)\sqrt{1-X(\tau)}\,\theta F(X(\tau)),
\qquad
\theta=X\frac{d}{dX}.
\]

\section{Rational modular ratios}

The derivation of \(A\) uses four rational modular ratios:
\begin{align}
\frac{E_4(\tau)}{G(\tau)^2}
&=
\frac{t+256}{t+64},
\label{eq:R4}
\\
\frac{E_4(2\tau)}{G(\tau)^2}
&=
\frac{t^2+80t+1024}{(t+64)^2},
\label{eq:R42}
\\
\frac{E_6(\tau)}{G(\tau)^3}
&=
\frac{t-512}{t+64},
\label{eq:R6}
\\
\frac{\Delta(\tau)}{G(\tau)^6}
&=
\frac{t^2}{(t+64)^3}.
\label{eq:RDelta}
\end{align}

\begin{lemma}[Finite divisor check]
The identities \ref{eq:R4}--\ref{eq:RDelta} are identities of modular
functions on \(X_0(2)\).
\end{lemma}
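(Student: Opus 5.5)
The plan is to prove each of \eqref{eq:R4}--\eqref{eq:RDelta} by clearing denominators. Each identity becomes an equality between two holomorphic modular forms of the same weight on \(\Gamma_0(2)\), and such an equality can be checked on finitely many \(q\)-coefficients via the Sturm bound, or equivalently by a divisor count on \(X_0(2)\). The first step is to express everything in terms of forms whose divisors are known. \(G=2E_2(2\tau)-E_2(\tau)\) is a holomorphic weight-2 form on \(\Gamma_0(2)\). Since \(\dim M_2(\Gamma_0(2))=1\), its divisor has total degree \(\tfrac{2}{12}\cdot 3=\tfrac12\), and this is concentrated at the elliptic point of order 2, \(\rho_2=(1+i)/2\). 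Write \(A=\eta(\tau)^{16}/\eta(2\tau)^8\) and \(B=\eta(2\tau)^{16}/\eta(\tau)^8\). These are weight-4 forms on \(\Gamma_0(2)\), and \(t=A/B\). I will also use \(t+64=\bigl(\eta(\tau)^{24}+64\eta(2\tau)^{24}\bigr)/\eta(\tau)^8\eta(2\tau)^{16}\). The key identity here is \(G^2=A+64B\). Both sides lie in \(M_4(\Gamma_0(2))\), which is 2-dimensional, so it suffices to compare the \(q^0\) and \(q^1\) coefficients: \(1\) and \(48\) on both sides, using \(A=1-16q+\cdots\) and \(B=q+\cdots\). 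It follows that
\[
(t+64)\,B=G^2 ,
\]
and the common denominator \(t+64\) vanishes exactly at the zero of \(G\), namely at \(\rho_2\).

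With this in hand, each ratio reduces to a linear identity in a space of known finite dimension.

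\eqref{eq:R4} becomes \(E_4(\tau)=A+256B\) in \(M_4(\Gamma_0(2))\).

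\eqref{eq:R42} becomes \(E_4(2\tau)(t+64)^2B^2=G^4\,(t^2+80t+1024)B^2\), which simplifies to \(E_4(2\tau)=A+16B\). The \(B\)-coefficient is forced by the \(q^1\) term \(-16+16=0\).

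\eqref{eq:R6} becomes \(E_6=G\,(A-512B)\) in \(M_6(\Gamma_0(2))\), which is 2-dimensional.

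\eqref{eq:RDelta} becomes \(\Delta\,(t+64)^3B^3=G^6\,t^2B^3\), that is \(\Delta=A^2B\). This last one is immediate from the eta quotients: \(\eta^{32}\eta_2^{-16}\cdot\eta_2^{16}\eta^{-8}=\eta^{24}\).

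Each of the first three is therefore a comparison of two or three \(q\)-coefficients within a space of dimension at most 2. The Sturm bound for weight \(k\) on \(\Gamma_0(2)\) is \(k\cdot 3/12\), which is at most 2 coefficients for \(k\le 6\). I will check \(E_4=1+240q\), \(A+256B=1+(-16+256)q\), and similarly for \(E_6=1-504q\) against \(G(A-512B)=(1+24q)(1-528q)\).

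The main obstacle is not the coefficient checks but the justification of the setup. I need to confirm that \(A\) and \(B\) are genuinely holomorphic modular forms on \(\Gamma_0(2)\) with trivial character, using the standard Ligozat/Newman criteria for eta quotients: the exponent sums are divisible by 24, and the orders at both cusps are nonnegative. I also need the dimension formulas for \(M_k(\Gamma_0(2))\), and the fact that \(G\) is modular, which follows because the nonholomorphic corrections of \(2E_2(2\tau)\) and \(E_2(\tau)\) cancel. Once these are in place, the lemma follows. The quotient identities are then identities of modular functions on \(X_0(2)\), since the numerator and denominator in each case have the same weight.
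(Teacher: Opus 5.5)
Your argument is correct, and it is organized differently from the paper's.

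\textbf{The paper's route.} The paper clears denominators directly. This gives four weakly holomorphic forms \(D_1,\dots,D_4\) whose only possible pole is at \(\infty\), coming from powers of \(t\). A computed \(q\)-expansion certificate shows that the principal parts cancel and that \(D_i=O(q^{14})\). The valence bound \(k/4\) then forces \(D_i=0\).

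\textbf{Your route.} You factor everything through the eta-quotient basis \(A,B\) of \(M_4(\Gamma_0(2))\), with \(t=A/B\). The single relation \(G^2=A+64B\), equivalently \((t+64)B=G^2\), turns each ratio into an identity between holomorphic forms:
\(E_4=A+256B\), \(E_4(2\tau)=A+16B\), \(E_6=G(A-512B)\), and \(\Delta=A^2B=\eta^{24}\).
Each of these is settled by two coefficients in a two-dimensional space, or by an eta-quotient identity.

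\textbf{What each buys.} Your version can be checked by hand and never has to handle poles or principal parts. It also explains the shape of the formulas. The common denominator \(t+64=G^2/B\) vanishes only at the elliptic point \((1+i)/2\). Moreover \(t^2+80t+1024=(t+16)(t+64)\), so \eqref{eq:R42} is really \((t+16)/(t+64)\); this is why the paper's \(D_2\) carries an a priori pole of order \(2\). The paper's version is a uniform template that needs only the valence formula and the fact that \(t\) is a Hauptmodul with a single simple pole at \(\infty\). Its cost is that the verification is delegated to a machine certificate. The underlying modular input is essentially the same as your Ligozat check on \(A\) and \(B\).

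\textbf{Two slips, neither affecting the proof.}
\begin{enumerate}
\item The eta expression you display for \(t+64\) should have denominator \(\eta(2\tau)^{24}\), not \(\eta(\tau)^8\eta(2\tau)^{16}\).
\item Clearing denominators in \eqref{eq:R42} gives \(G^2\), not \(G^4\), on the right-hand side.
\end{enumerate}
Your actual reductions rest only on \((t+64)B=G^2\), which is correct.
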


\begin{proof}
After clearing denominators, put
\begin{align*}
D_1&=(t+64)E_4-(t+256)G^2,\\
D_2&=(t+64)^2E_4(2\tau)-(t^2+80t+1024)G^2,\\
D_3&=(t+64)E_6-(t-512)G^3,\\
D_4&=(t+64)^3\Delta-t^2G^6.
\end{align*}
A priori, each \(D_i\) is a meromorphic modular form on \(\Gamma_0(2)\); the only
possible pole comes from powers of the Hauptmodul \(t=q^{-1}+O(1)\) at the cusp
\(\infty\).  There are no denominators involving \(t\) in the cleared expressions.
The formal \(q\)-expansion certificate verifies that the principal part at
\(\infty\) cancels and that
\[
D_1=O(q^{14}),\qquad
D_2=O(q^{14}),\qquad
D_3=O(q^{14}),\qquad
D_4=O(q^{14}).
\]
The relevant bounds are as follows.
\begin{center}
\begin{tabular}{c|c|c|c}
cleared difference & weight & a priori pole order at \(\infty\) & valence bound \\
\hline
\(D_1\) & \(4\) & \(1\) & \(1\) \\
\(D_2\) & \(4\) & \(2\) & \(1\) \\
\(D_3\) & \(6\) & \(1\) & \(3/2\) \\
\(D_4\) & \(12\) & \(2\) & \(3\)
\end{tabular}
\end{center}
Indeed,
\([\SL_2(\mathbb Z):\Gamma_0(2)]=3\), so a nonzero holomorphic modular form of
weight \(k\) on \(\Gamma_0(2)\) has total divisor degree \(k/4\).  Once the
\(q\)-certificate has removed the possible principal part at the cusp, each
\(D_i\) is holomorphic.  Its vanishing order at \(\infty\) is at least \(14\), which
is larger than \(k/4\) in all four cases.  Hence every \(D_i\) is identically zero.
\end{proof}

\section{The CM point \texorpdfstring{\(D=-163\)}{D=-163}}

Let
\[
\tau_{163}=\frac{1+\sqrt{-163}}2.
\]
Then
\[
j(\tau_{163})=-640320^3.
\]
Thus \(t=t(\tau_{163})\) satisfies
\[
(t+256)^3+640320^3t^2=0,
\]
i.e.
\begin{equation}
P_t(t)=
t^3
+
262537412640768768t^2
+
196608t
+
16777216
=0.
\label{eq:Pt}
\end{equation}
Let
\[
K=\mathbb Q(t),\qquad P_t(t)=0,
\]
be the corresponding cubic field.

Define
\begin{equation}
\xi=X(\tau_{163})
=
\frac{256t}{(t+64)^2}.
\label{eq:xi-def}
\end{equation}
Eliminating \(t\) from \eqref{eq:Pt} and \eqref{eq:xi-def} gives
\begin{equation}
\begin{aligned}
0={}&
16827610604518993301932059648729\,\xi^3\\
&-
3396577776039932112\,\xi^2
+
4200598602252294912\,\xi
+
4096.
\end{aligned}
\label{eq:Pxi}
\end{equation}
The chosen branch is the real small root
\[
\xi\approx
-9.7509911987395938485584703467480\times10^{-16}.
\]
This decimal value is only illustrative; throughout the formula below,
\(\xi\) denotes the specified real root of the cubic polynomial
\eqref{eq:Pxi}.

\section{The coefficients \texorpdfstring{\(A\) and \(B\)}{A and B}}

At
\[
\tau_{163}=\frac{1+\sqrt{-163}}2
\]
one has
\[
\operatorname{Im}(\tau_{163})=\frac{\sqrt{163}}2.
\]
The almost-holomorphic Eisenstein series
\[
E_2^*(\tau)=E_2(\tau)-\frac{3}{\pi\,\operatorname{Im}\tau}
\]
therefore gives
\[
E_2(\tau_{163})
=
E_2^*(\tau_{163})+\frac{6}{\pi\sqrt{163}}.
\]
The non-holomorphic corrections also cancel in the combination
\[
2E_2^*(2\tau)-E_2^*(\tau)=2E_2(2\tau)-E_2(\tau)=G(\tau),
\]
because \(\operatorname{Im}(2\tau)=2\operatorname{Im}(\tau)\).  Thus the
almost-holomorphic input used below is compatible with the same modular form
\(G\) used in the \(q\)-expansion verification.

The Ramanujan--Sato linear form used below is the following specialization of
the standard CM construction.

\begin{lemma}[Linear form]
Let \(\tau=(1+\sqrt{-d})/2\) be on the branch above, and set
\(X=X(\tau)\).  With \(F\), \(G\), and \(t\) as above,
\begin{equation}
\frac1\pi
=
\sqrt d\left(\mathcal A(\tau)F(X)+\sqrt{1-X}\,\theta F(X)\right),
\label{eq:linear-form}
\end{equation}
where
\begin{equation}
\mathcal A(\tau)=\frac1{12}\left(
4\frac{E_4(2\tau)}{G(\tau)^2}
-\frac{E_4(\tau)}{G(\tau)^2}
-1
-2\frac{E_2^*(\tau)}{G(\tau)}
\right).
\label{eq:A-general}
\end{equation}
\end{lemma}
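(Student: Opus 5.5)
The plan is to show that \eqref{eq:linear-form} is an algebraic consequence of identities already established, namely the differential system in the proof of the Proposition together with the definition of \(E_2^*\). In particular, I expect it to hold for every \(\tau\) on the chosen branch, not only at CM points. The CM hypothesis would then matter only later, for the algebraicity of \(\mathcal A(\tau_{163})\).

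First, I would rewrite the bracket in terms of \(G\). By the Proposition, \(F(X(\tau))=G(\tau)\) on the branch where the local identification holds. By \eqref{eq:dlogX}, \(\Theta=G\sqrt{1-X}\,\theta_X\), so
\[
\sqrt{1-X}\,\theta F(X)=\frac{\Theta G}{G}.
\]
The bracket in \eqref{eq:linear-form} therefore becomes \(\mathcal A(\tau)G+\Theta G/G\).

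Next, I would insert the Ramanujan-type relation from the proof of the Proposition,
\[
\Theta G=\frac{G^2}{12}\Bigl(1+2\frac{E_2}{G}+R_4-4R_{42}\Bigr),
\]
where \(R_4\) and \(R_{42}\) are given by \eqref{eq:R4} and \eqref{eq:R42}. I would re-derive this from Ramanujan's \(\Theta E_2=(E_2^2-E_4)/12\), applied at \(\tau\) and at \(2\tau\) (the latter carrying the extra factor \(2\) from the chain rule). This gives
\[
\Theta G=\frac{1}{12}\bigl(8E_2(2\tau)^2-E_2(\tau)^2-4E_4(2\tau)+E_4(\tau)\bigr).
\]
Writing \(2E_2(2\tau)=G+E_2\) turns this into the displayed formula.

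Adding \(\mathcal A G\) from \eqref{eq:A-general}, the terms \(1\), \(R_4\) and \(4R_{42}\) cancel exactly. What remains is
\[
\frac{G}{12}\cdot\frac{2(E_2-E_2^*)}{G}=\frac{E_2-E_2^*}{6}=\frac{1}{2\pi\operatorname{Im}\tau}.
\]
With \(\operatorname{Im}\tau=\sqrt d/2\), this equals \(1/(\pi\sqrt d)\), and multiplying by \(\sqrt d\) gives \eqref{eq:linear-form}.

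The main obstacle is not the algebra but justifying the evaluation at \(\tau\) itself. One must check that \(\tau\) lies in the region where \(F(X(\tau))=G(\tau)\) holds, either directly (here \(|X|\approx10^{-15}\), so \(|q|\) is tiny and the \(q\)-series converge) or by analytic continuation along a path on which \(X\) stays in the disc of convergence. One must also check that the branch \(\sqrt{1-X}=(t-64)/(t+64)\) fixed at the cusp agrees with the principal branch at \(\tau\). For \(\tau_{163}\), I would verify the second point by noting that \(t(\tau_{163})\approx -e^{\pi\sqrt{163}}\) is large and negative, so \((t-64)/(t+64)\) is close to \(+1\).
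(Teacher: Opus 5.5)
Your proposal follows the paper's proof. You rewrite \(\sqrt{1-X}\,\theta F(X)\) as \(\Theta G/G\) using \(F(X(\tau))=G(\tau)\) and \(\Theta\log X=G\sqrt{1-X}\). You then expand \(\Theta G\) with Ramanujan's identities and add \(\mathcal A(\tau)G\), so that everything cancels except \((E_2-E_2^*)/6=1/(2\pi\operatorname{Im}\tau)\). You are right that, as in the paper, the CM property is never used here.

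There is one slip in your re-derivation. Since \(\Theta[E_2(2\tau)]=(E_2(2\tau)^2-E_4(2\tau))/6\), the term \(2E_2(2\tau)\) in \(G\) contributes \((4E_2(2\tau)^2-4E_4(2\tau))/12\). The coefficient of \(E_2(2\tau)^2\) is therefore \(4\), not \(8\); it must match the coefficient of \(E_4(2\tau)\), because both come from the same Ramanujan identity. With \(8\), the substitution \(2E_2(2\tau)=G+E_2\) gives \(2G^2+4GE_2+E_2^2\) instead of \(G^2+2GE_2\), and your displayed formula for \(\Theta G\) would not follow. With \(4\) it does, exactly as in the paper.

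Your remarks on the validity of \(F(X(\tau))=G(\tau)\) at \(\tau\) itself, and on \((t-64)/(t+64)\) agreeing with the principal branch of \(\sqrt{1-X}\) because \(t(\tau_{163})\approx-e^{\pi\sqrt{163}}\), are correct. The paper leaves these points implicit in the phrase ``on the branch above''.
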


\begin{proof}
From \(F(X(\tau))=G(\tau)\) and \eqref{eq:dlogX},
\[
\frac{q\,dG/dq}{G}
=
\frac{q\,dF(X(\tau))/dq}{G}
=
\frac{G\sqrt{1-X}\,\theta F(X)}{G}
=
\sqrt{1-X}\,\theta F(X).
\]
Ramanujan's differential identities give
\[
q\frac{dE_2}{dq}=\frac{E_2^2-E_4}{12},
\qquad
q\frac{dE_2(2\tau)}{dq}=\frac{E_2(2\tau)^2-E_4(2\tau)}6.
\]
Since \(G=2E_2(2\tau)-E_2(\tau)\),
\[
\begin{aligned}
q\frac{dG}{dq}
&=\frac{4E_2(2\tau)^2-E_2(\tau)^2-4E_4(2\tau)+E_4(\tau)}{12} \\
&=\frac{G^2+2E_2G+E_4-4E_4(2\tau)}{12}.
\end{aligned}
\]
Therefore
\[
\frac{q\,dG/dq}{G}
=\frac1{12}\left(
G+2E_2+\frac{E_4}{G}-4\frac{E_4(2\tau)}{G}
\right).
\]
Also, from the definition of \(\mathcal A\),
\[
\mathcal A(\tau)G
=\frac1{12}\left(
4\frac{E_4(2\tau)}{G}-\frac{E_4}{G}-G-2E_2^*
\right).
\]
Adding the last two displayed formulas gives the cancellation
\[
\mathcal A(\tau)G+\frac{q\,dG/dq}{G}
=\frac{E_2-E_2^*}{6}.
\]
Since
\[
E_2-E_2^*=\frac{3}{\pi\operatorname{Im}\tau},
\]
we obtain
\[
\mathcal A(\tau)F(X)+\sqrt{1-X}\,\theta F(X)
=\frac{1}{2\pi\operatorname{Im}\tau}.
\]
For \(\tau=(1+\sqrt{-d})/2\), this is \(1/(\pi\sqrt d)\).  Multiplication by
\(\sqrt d\) proves \eqref{eq:linear-form}.
\end{proof}

The coefficient multiplying \(\theta F\) in \eqref{eq:linear-form} at
\(d=163\) is
\begin{equation}
B_{-163}=\sqrt{163}\sqrt{1-\xi}.
\label{eq:B}
\end{equation}

For \(A\), use Masser's almost-holomorphic modular functions
\[
\chi^*(\tau)=\frac{E_2^*(\tau)E_4(\tau)E_6(\tau)}{\Delta(\tau)},
\qquad
\psi(\tau)=\frac{E_2^*(\tau)E_4(\tau)}{E_6(\tau)}.
\]
With the normalizations
\[
\Delta=\frac{E_4^3-E_6^2}{1728},
\qquad
j=\frac{E_4^3}{\Delta},
\]
one has
\begin{equation}
\chi^*(\tau)=\psi(\tau)(j(\tau)-1728).
\label{eq:chi-psi}
\end{equation}
Indeed, \(E_6^2=E_4^3-1728\Delta\), so \(E_6^2/\Delta=j-1728\).
Masser's theory of almost-holomorphic CM values, in the normalization used by
Milla, gives
\[
\psi(\tau_{163})=\frac{77265280}{90856689}
\qquad \cite{Masser,Milla}.
\]
Together with \(j(\tau_{163})=-640320^3\), \eqref{eq:chi-psi} gives
\[
\begin{aligned}
\chi^*(\tau_{163})
&=\frac{77265280}{90856689}\left(-640320^3-1728\right)\\
&=-223263987730882560,
\end{aligned}
\]
because \(-640320^3-1728=-2889577152\cdot90856689\).  Thus
\begin{equation}
\chi^*(\tau_{163})=-223263987730882560.
\label{eq:chi}
\end{equation}
This CM value is an external input from Masser--Milla theory, not a coefficient
fitted from the final \(1/\pi\) identity.

Let \(\chi\) denote the value in \eqref{eq:chi}. Put
\[
u=\frac{E_2^*(\tau_{163})}{G(\tau_{163})}.
\]
By \eqref{eq:R4}, \eqref{eq:R6}, and \eqref{eq:RDelta},
\[
u=
\chi
\frac{t^2}{(t+64)(t+256)(t-512)}.
\]

Writing
\[
A_{-163}=\sqrt{163}\,\alpha,
\]
equations \eqref{eq:A-general}, \eqref{eq:R4}, \eqref{eq:R42}, and the above
expression for \(u\) give
\begin{equation}
\begin{aligned}
\alpha=
\frac{1}{12}\Bigg(&
4\frac{t^2+80t+1024}{(t+64)^2}
-
\frac{t+256}{t+64}
-
1\\
&-
2\chi
\frac{t^2}{(t+64)(t+256)(t-512)}
\Bigg).
\end{aligned}
\label{eq:alpha-def}
\end{equation}
Thus \(\alpha\in K\). Taking the norm
\[
\Norm_{K/\mathbb Q}(Y-\alpha)
\]
gives
\begin{equation}
668649972819460401Y^3
+
50012252033677839Y^2
+
1467Y
-
41450311432931=0.
\label{eq:Palpha}
\end{equation}
The chosen branch is the positive real root
\[
\alpha
\approx
0.02493195446879323036925498953687225.
\]

\section{Formula}

\begin{theorem}
Let \(\xi\) be the real small root
of \eqref{eq:Pxi}, and let \(\alpha\) be the positive real root of
\eqref{eq:Palpha}. Then
\begin{equation}
\boxed{
\frac1\pi
=
\sqrt{163}
\sum_{n=0}^{\infty}
\frac{(4n)!}{256^n(n!)^4}
\xi^n
\Bigl(
\alpha+\sqrt{1-\xi}\,n
\Bigr).
}
\label{eq:main}
\end{equation}
\end{theorem}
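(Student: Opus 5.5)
The plan is to specialize the Linear form lemma at \(d=163\) and then unwind it termwise. With \(\tau=\tau_{163}\) and \(X=\xi\), the lemma gives
\[
\frac1\pi=\sqrt{163}\left(\mathcal A(\tau_{163})F(\xi)+\sqrt{1-\xi}\,\theta F(\xi)\right).
\]
Writing \(F(\xi)=\sum_n c_n\xi^n\) with \(c_n=(4n)!/(256^n(n!)^4)\), we have \(\theta F(\xi)=\sum_n n c_n\xi^n\). Both series converge absolutely because \(|\xi|\approx 10^{-15}<1\), which is the radius of convergence of the \({}_3F_2\). Hence the sums may be combined termwise, and the boxed identity \eqref{eq:main} follows once we establish two facts. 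First, the branch \(X(\tau_{163})\) singled out by the modular parametrization is the real small root of \eqref{eq:Pxi}, and \(\sqrt{1-X}\) is the positive root. Second, \(\mathcal A(\tau_{163})=\alpha\) is the positive real root of \eqref{eq:Palpha}.

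For the first fact, we use \(q=e^{2\pi i\tau_{163}}=-e^{-\pi\sqrt{163}}\), which is real and negative with \(|q|\approx 4\times 10^{-18}\). Since \(t\) has real \(q\)-coefficients, \(t(\tau_{163})\approx q^{-1}-24\) is the real root of \eqref{eq:Pt} near \(-2.6\times10^{17}\). Then \(X=256q+O(q^2)\) gives \(\xi\approx 256q\approx -9.75\times10^{-16}\), which is the stated real small root of \eqref{eq:Pxi}; the other two roots of \eqref{eq:Pxi}, coming from the complex conjugate roots \(t\), are not small. On this branch, \eqref{eq:sqrt1X} gives \(\sqrt{1-\xi}=(t-64)/(t+64)\), which is positive and close to \(1\). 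This matches the principal square root in \eqref{eq:main}. One caveat is that \(\tau_{163}\) must lie in the region where the local identity \(F(X(\tau))=G(\tau)\) of the Proposition holds, so that analytic continuation is not an issue. The tiny \(|q|\) and \(|X|\) guarantee this: both sides are convergent power series in \(q\) there, and they agree as formal series.

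For the second fact, we substitute the rational modular ratios \eqref{eq:R4}, \eqref{eq:R42}, \eqref{eq:R6}, \eqref{eq:RDelta} (justified by the Finite divisor check lemma) into \eqref{eq:A-general}. This requires expressing \(E_2^*/G\) through \(\chi^*=E_2^*E_4E_6/\Delta\):
\[
\frac{E_2^*}{G}=\chi^*\,\frac{\Delta/G^6}{(E_4/G^2)(E_6/G^3)}=\chi^*\frac{t^2}{(t+64)(t+256)(t-512)}.
\]
The only nonformal input is the Masser--Milla value \(\psi(\tau_{163})\), which through \eqref{eq:chi-psi} yields \eqref{eq:chi}. This gives exactly \eqref{eq:alpha-def} evaluated at the real root \(t\). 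Computing \(\Norm_{K/\mathbb Q}(Y-\alpha)\), for instance as a resultant in \(t\), produces \eqref{eq:Palpha}. A numerical evaluation at the real \(t\) then shows that \(\alpha\) is the real conjugate and is positive, about \(0.02493\).

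I expect the main obstacle to be the rigorous justification of the CM input \(\chi^*(\tau_{163})\). Everything else is either an identity of modular functions already proved or a finite algebraic and numerical check. For this step I would rely on the cited evaluation in Milla's normalization, checking that the normalization of \(E_2^*\) used there matches \(E_2-3/(\pi\operatorname{Im}\tau)\). As an independent sanity check, I would verify numerically to high precision that \(E_2^*(\tau_{163})E_4E_6/\Delta\), computed from \(q\)-series, equals \(-223263987730882560\). Because \(|\xi|\approx10^{-15}\), a few terms of \eqref{eq:main} then reproduce \(1/\pi\) to many digits, which confirms the branch choices.
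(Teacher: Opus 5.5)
Your proposal is correct and follows the paper's route: specialize the Linear form lemma at \(d=163\), identify \(\xi=X(\tau_{163})\) and \(\sqrt{1-\xi}=(t-64)/(t+64)\) on the cusp branch, then obtain \(\mathcal A(\tau_{163})=\alpha\) from the rational ratios and the Masser--Milla value of \(\chi^*\), with the resultant computation giving \eqref{eq:Palpha}. Your explicit branch checks (negative real \(q\), the real root of \eqref{eq:Pt}, validity of \(F(X(\tau))=G(\tau)\) at \(\tau_{163}\)) fill in points the paper leaves implicit; one small correction is that the two non-real roots of \eqref{eq:Pxi} are also small, of modulus about \(5\times10^{-7}\), so it is being real, not being small, that singles out \(\xi\).
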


In \eqref{eq:main}, the coefficient of \(n\) inside the parentheses is
\(\sqrt{1-\xi}\); after multiplication by the prefactor \(\sqrt{163}\), this
is exactly the coefficient \(B_{-163}\) from \eqref{eq:B}.

Since
\[
|\xi|\approx9.7509911987\times10^{-16},
\]
the series yields approximately \(15.01\) decimal digits per term.

\appendix

\section{Algebraic provenance and verification}
\label{app:provenance}

This appendix records the status of the auxiliary CM evaluation and the exact
algebraic checks used in the construction.  The final identity for \(1/\pi\)
is not used to determine the constants appearing in Theorem~1.

\subsection{CM input}

The value \eqref{eq:chi} is used with the normalization
\[
\chi^*(\tau)
=
\frac{E_2^*(\tau)E_4(\tau)E_6(\tau)}{\Delta(\tau)},
\qquad
E_2^*(\tau)=E_2(\tau)-\frac{3}{\pi\operatorname{Im}\tau}.
\]
It is derived in Section~7 from the Masser--Milla CM value of
\(\psi=E_2^*E_4/E_6\) and the identity \(\chi^*=\psi(j-1728)\).  The ancillary
verification scripts do not determine this value from the final series; they
verify only the algebraic consequences after the independent CM input has been
specified.

\medskip
\noindent\textbf{Summary.}  The constants \(t\), \(\xi\), and \(\alpha\) are defined
algebraically before the final \(1/\pi\) evaluation.  The final formula is not
used to fit any of these constants.

\subsection{Exact algebraic derivation of the cubic for \texorpdfstring{\(\alpha\)}{alpha}}

With \(\chi\) denoting the value in \eqref{eq:chi}, the coefficient \(\alpha\)
is the explicit element of \(K=\mathbb Q(t)\) defined by
\[
\alpha
=
\frac{1}{12}
\left[
4\frac{t^2+80t+1024}{(t+64)^2}
-
\frac{t+256}{t+64}
-
1
-
2\chi
\frac{t^2}{(t+64)(t+256)(t-512)}
\right].
\]
Here \(t=t(\tau_{163})\) satisfies
\[
P_t(t)
=
t^3+262537412640768768t^2+196608t+16777216=0,
\]
which follows from \(j(\tau_{163})=-640320^3\) and
\(j=(t+256)^3/t^2\).

Writing
\[
\alpha=\frac{N_\alpha(t)}{D_\alpha(t)},
\qquad N_\alpha,D_\alpha\in\mathbb Q[t],
\]
the exact symbolic check gives
\[
\operatorname{Res}_t
\left(
P_t(t),
Y\,D_\alpha(t)-N_\alpha(t)
\right)
=
c\,P_\alpha(Y),
\qquad
c\in\mathbb Q^\times,
\]
with zero remainder, where
\[
P_\alpha(Y)
=
668649972819460401Y^3
+
50012252033677839Y^2
+
1467Y
-
41450311432931.
\]
Equivalently,
\[
P_\alpha(Y)=\operatorname{Norm}_{K/\mathbb Q}(Y-\alpha)
\]
up to primitive normalization.  Thus the cubic for \(\alpha\) is the norm
polynomial of an explicitly defined element of \(\mathbb Q(t)\).

\subsection{Finite \texorpdfstring{\(q\)}{q}-expansion verification}

The modular identities used in the derivation are checked by formal
\(q\)-expansion in the ancillary verification scripts.  The proof mechanism is
the finite divisor check in Section~5: after clearing denominators, the only
possible pole comes from the Hauptmodul \(t=q^{-1}+O(1)\) at the cusp.  The
\(q\)-certificate verifies that this principal part cancels and that the cleared
differences vanish to order at least \(14\), which is beyond the valence bounds
\(1,1,3/2,3\) for weights \(4,4,6,12\) on \(\Gamma_0(2)\).

\subsection{Reproducibility}

The ancillary files contain verification scripts which check the algebraic
relations for \(t\), \(\xi\), and \(\alpha\), the resultant computation, the
numerical convergence of the displayed series, and the \(q\)-expansion
identities used in Sections 4 and 5.

\end{document}